\newtheorem{theorem}{Theorem}[section]
\newtheorem{lemma}[theorem]{Lemma}
\newtheorem{corollary}[theorem]{Corollary}
\theoremstyle{definition}
\theoremstyle{remark}
\numberwithin{equation}{section}
\newcommand{\mmod}[1]{\,\,(\text{\rm mod}\,\,#1)}
\def\bfx{{\mathbf x}}
\def\bfy{{\mathbf y}}
\def\bfz{{\mathbf z}}
\def\calC{{\mathcal C}}
\def\what{\widehat w}
\def\dbN{{\mathbb N}}  
\def\dbR{{\mathbb R}}
\def\dbZ{{\mathbb Z}}
\def\gra{{\mathfrak a}}
\def\grB{{\mathfrak B}}
\def\grD{{\mathfrak D}}\def\grE{{\mathfrak E}}
\def\grJ{{\mathfrak J}}
\def\grm{{\mathfrak m}}\def\grM{{\mathfrak M}}
\def\grN{{\mathfrak N}}
\def\grS{{\mathfrak S}}
\def\grB{{\mathfrak B}}
\def\grU{{\mathfrak U}}
\def\alp{{\alpha}} \def\bfalp{{\boldsymbol \alpha}}
\def\bet{{\beta}}  \def\bfbet{{\boldsymbol \beta}}
\def\gam{{\gamma}} 
\def\del{{\delta}}  
\def\zet{{\zeta}}  
\def\bfeta{{\boldsymbol \eta}}
 \def\Lam{{\Lambda}} 
\def\bfxi{{\boldsymbol \xi}}
\def\Ups{{\Upsilon}} 
 \def\chitil{{\widetilde \chi}}
\def\eps{\varepsilon}
\def\le{\leqslant} \def\ge{\geqslant}
\def\d{{\,{\rm d}}}
\begin{document}
\title[the major and minor arc integrals]{An instance where the 
major and minor\\ arc integrals meet}
\author[J\"org Br\"udern]{J\"org Br\"udern}
\address{Mathematisches Institut, Bunsenstrasse 3--5, D-37073 G\"ottingen, Germany}
\email{bruedern@uni-math.gwdg.de}
\author[Trevor D. Wooley]{Trevor D. Wooley}
\address{School of Mathematics, University of Bristol, University Walk, Clifton, Bristol BS8 
1TW, United Kingdom}
\email{matdw@bristol.ac.uk}
\subjclass[2010]{11D72, 11L15, 11L07, 11P55, 11E76}
\keywords{Cubic Diophantine equations, Hardy-Littlewood method.}
\date{}
\dedicatory{Dedicated to the memory of Christopher Hooley}
\begin{abstract}We apply the circle method to obtain an asymptotic formula for the 
number of integral points on a certain sliced cubic hypersurface related to the Segre cubic. 
Unusually, the major and minor arc integrals in this application are both positive and of the 
same order of magnitude.\end{abstract}
\maketitle

\section{Introduction} In this communication we concern ourselves with the simultaneous 
Diophantine equations
\begin{equation}\label{1.1}
\sum_{i=1}^{10}x_i^3=\sum_{i=1}^6x_i=\sum_{i=7}^{10}x_i=0,
\end{equation}
and specifically with the number $N(B)$ of integral solutions $\bfx$ of this system 
satisfying $|x_i|\le B$. Associated with (\ref{1.1}) are the $p$-adic densities
\begin{equation}\label{1.2}
\chi_p=\lim_{h\rightarrow \infty} (p^h)^{-7}M_p(h),
\end{equation}
in which, for each prime number $p$, the expression $M_p(h)$ denotes the number of 
solutions of (\ref{1.1}) with $\bfx\in (\dbZ/p^h\dbZ)^s$. Also, one has the real density
\begin{equation}\label{1.3}
\chi_\infty =\lim_{\eta\rightarrow 0+}\, (2\eta)^{-3}M_\infty(\eta),
\end{equation}
where $M_\infty(\eta)$ denotes the volume of the subset of 
$\left[-{\textstyle{\frac{1}{2}}},{\textstyle{\frac{1}{2}}}\right]^{10}$ defined by the 
simultaneous inequalities
\begin{equation}\label{1.4}
\biggl| \sum_{i=1}^{10}x_i^3\biggr|<\eta,\quad \biggl| \sum_{i=1}^6x_i\biggr|<\eta ,
\quad \biggl| \sum_{i=7}^{10}x_i\biggr|<\eta .
\end{equation}
We note that our definition here of the real density employs a unit hypercube, in contrast 
to the commonly favoured box $[-1,1]^{10}$.\par

Our principal goal is an asymptotic formula for $N(B)$.

\begin{theorem}\label{theorem1.1}
One has
\begin{equation}\label{1.5}
N(B)=(45+\calC)(2B)^5+O(B^{5-1/200}),
\end{equation}
where $\calC>0$ is defined by the absolutely convergent product $\calC=\chi_\infty 
\prod_p\chi_p$.
\end{theorem}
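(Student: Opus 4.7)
\emph{Proof plan.}
The approach is to apply the circle method to the representation
$$N(B) = \int_0^1 F_6(\alpha) F_4(\alpha) \, d\alpha,$$
where, for $k \in \{4,6\}$,
$$F_k(\alpha) = \sum_{\substack{|x_i| \le B \\ x_1+\cdots+x_k=0}} e\bigl(\alpha(x_1^3+\cdots+x_k^3)\bigr) = \int_0^1 g(\alpha,\beta)^k \, d\beta,$$
with $g(\alpha,\beta) = \sum_{|x|\le B} e(\alpha x^3 + \beta x)$. I would dissect $[0,1]$ by a cubic Farey dissection into major arcs $\grM$ around rationals of denominator at most $Q = B^\eta$ and minor arcs $\grm = [0,1]\setminus \grM$. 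The overall strategy separates a classical major-arc analysis yielding $\mathcal{C}(2B)^5$ from an unconventional minor-arc analysis yielding the additional $45(2B)^5$.

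On $\grM$, the standard rational approximation to $g(\alpha,\beta)$ near $\alpha \approx a/q$, together with integration in $\beta$, yields an asymptotic for $F_k(\alpha)$ in terms of complete exponential sums and a real density integral. Multiplying $F_6 \cdot F_4$ and summing over $(q,a)$ assembles the product $\prod_p \chi_p \cdot \chi_\infty = \mathcal{C}$, producing
$$\int_\grM F_6(\alpha) F_4(\alpha) \, d\alpha = \mathcal{C}(2B)^5 + O(B^{5-\delta}).$$
This is the routine half of the argument.

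The novelty lies on the minor arcs. There are $15 \cdot 3 = 45$ ways to perfectly match the first six indices into three pairs $x_i = -x_j$ and the last four into two such pairs, and each resulting tuple automatically solves $\sum x_i^3 = 0$. I would encode this by writing $F_k(\alpha) = D_k + H_k(\alpha)$, where $D_k$ counts pair-matched $k$-tuples with $\sum x_i = 0$, so that $D_6 = 15(2B+1)^3 + O(B^2)$ and $D_4 = 3(2B+1)^2 + O(B)$ after accounting for overlaps at zero. Since $|\grm| = 1 - O(B^{-3+\eps})$, the expansion
$$\int_\grm F_6 F_4 \, d\alpha = D_6 D_4 |\grm| + D_6 \int_\grm H_4 \, d\alpha + D_4 \int_\grm H_6 \, d\alpha + \int_\grm H_6(\alpha) H_4(\alpha) \, d\alpha$$
already produces $45(2B)^5 + O(B^{2+\eps})$ from its leading term.

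The main obstacle is to bound the three remaining contributions by $O(B^{5 - 1/200})$. For the $H_4$ cross term one exploits a rigidity: when $p_1 = 0$ in four variables, Newton's identity gives $p_3 = 3 e_3$, so any integer solution of $\sum_{i=1}^4 x_i = \sum_{i=1}^4 x_i^3 = 0$ has biquadratic characteristic polynomial and is therefore a pair matching. Hence $\int_0^1 H_4 \, d\alpha = 0$, so $\int_\grm H_4 = -\int_\grM H_4$ is controlled by the major-arc machinery already assembled. The $H_6$ cross term is handled analogously by invoking the circle method for the 6-variable subsystem and subtracting its main term. The hardest estimate is the quadratic remainder $\int_\grm H_6 H_4 \, d\alpha$, which I would attack via Cauchy--Schwarz together with minor-arc moment bounds $\int_\grm |H_k|^2 \, d\alpha$; these reduce by Parseval to counts of $2k$-tuples with near-coincidence cubic and linear identities, and their control ultimately requires Hua-type mean-value inequalities for $g(\alpha,\beta)$ supplemented by a Weyl bound on the cubic Weyl sum $f(\alpha) = g(\alpha,0)$ over $\grm$. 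The slim exponent $B^{-1/200}$ recorded in (\ref{1.5}) reflects how tightly these cubic estimates must be exploited to preserve a positive saving on both arcs simultaneously.
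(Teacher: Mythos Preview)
Your overall strategy is correct and closely mirrors the paper's: dissect only in $\alpha$, extract $\calC(2B)^5$ from the major arcs by standard approximation, and obtain $45(2B)^5$ on the minor arcs by isolating the constant Fourier mode. Your decomposition $F_k=D_k+H_k$ is a repackaging of the paper's expansion $N(B;\grm)=\sum_n u(n;\grm)v(n)$, your quadratic remainder $\int_\grm H_6H_4$ coinciding (up to the easy cross term $D_6\int_\grm H_4$) with the paper's $\sum_{n\ne 0}u(n;\grm)v(n)$. One point of language: the input you need for $D_4\int_\grm H_6$, namely $\int_0^1 H_6=O(B^2(\log B)^5)$, is not a circle-method result but the Segre-cubic count of \cite{VW1995,dlB2007}, which the paper imports as a black box.

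There is, however, a real gap in your treatment of the quadratic remainder. Hua-type mean values for $g(\alpha,\beta)$ together with Weyl do not close the Cauchy--Schwarz estimate: you need $\int_0^1|H_4|^2\ll B^{3+\eps}$, and this is \emph{not} a mean-value statement but follows from the pointwise arithmetic bound $v(n)\ll n^\eps$ for $n\ne 0$. This is precisely the $n\ne 0$ companion to your Newton-identity observation at $n=0$: when $x_1+\cdots+x_4=0$ one has $x_1^3+\cdots+x_4^3=-3(x_1+x_2)(x_2+x_3)(x_3+x_1)$, so a divisor estimate gives $v(n)=O(n^\eps)$ and hence $\int_0^1|H_4|^2=\sum_{n\ne 0}v(n)^2\ll B^{3+\eps}$. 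With this in hand, Cauchy--Schwarz combined with the Weyl-based bound $\int_\grm F_6^2\ll B^{2-\del/2+\eps}N(B)$ yields $\int_\grm H_6H_4\ll B^{5/2-\del/4+\eps}N(B)^{1/2}$. Note that the right-hand side still involves $N(B)$; the paper closes by treating the assembled estimate as a quadratic inequality in $N(B)^{1/2}$, a step you should make explicit.
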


The term $\calC(2B)^5$ in (\ref{1.5}) arises from the product of local densities in the 
counting problem under consideration, and is asymptotically equal to the major arc 
integral in a Hardy-Littlewood (circle) method treatment of $N(B)$. The additional term 
$45(2B)^5$ stems from $45$ linear spaces of affine dimension $5$ contained in the variety 
defined by (\ref{1.1}), as exemplified by that defined via the equations 
$x_{2i-1}+x_{2i}=0$ $(1\le i\le 5)$. A notable feature of our treatment of $N(B)$ is that 
this second term may be seen as arising directly from the minor arc integral in an 
application of the circle method.\par

Manin and his collaborators \cite{BM1990, FMT1989} and later Peyre \cite{Pey1995} made 
far-reaching predictions concerning the number of rational points of bounded height on a 
large class of projective varieties. When interpreted in terms of the circle method, one is 
led to the general philosophy, made explicit in the appendix to \cite{VW1995}, that the 
major arc integral accounts for the contribution of the generic points in the solution set, this 
being approximated by the product of local densities. Meanwhile, the minor arc integral 
should approximate the number of points lying on certain special subvarieties. Examples 
involving problems of degree exceeding two in which this heuristic has been substantiated 
are rare. Moreover, hitherto, the derivation of the associated asymptotic formulae did not 
proceed via the circle method, and the alignment with the conjectured behaviour was 
identified only {\it a posteriori} by a formal computation of the major arc integral and 
comparison with the product of local densities. What we have in mind here are such 
approaches as that involving the application of torsors (see for example \cite{BBS2014, 
dlB2007, Pey2001}). In contrast, our theorem is obtained by a direct application of the 
circle method. The special subvarieties in question are the aforementioned $45$ linear 
spaces, each containing approximately $(2B)^5$ integral points in $[-B,B]^{10}$. As our 
analysis shows, the minor arc integral is also approximately $45(2B)^5$. The contributions 
arising from the major and minor arc integrals are consequently of the same order of 
magnitude. Hooley \cite[Chapter II.1]{Hoo1986} has offered evidence in support of the 
conjecture that a similar phenomenon holds also in a circle method approach to counting 
integral solutions of the equation
$$x_1^3+x_2^3+x_3^3=x_4^3+x_5^3+x_6^3.$$

A consequence of Theorem \ref{theorem1.1} is an essentially optimal mean value estimate.

\begin{corollary}\label{corollary1.2} One has
$$\int_{[0,1)^3} \Biggl| \sum_{1\le x,y\le X}e(\alp_1x+\alp_2y+\alp_3(x^3+y^3))
\Biggr|^5\d\bfalp \asymp X^5.$$
\end{corollary}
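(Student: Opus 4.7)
The plan is to factorise the inner exponential sum and reduce to Theorem~\ref{theorem1.1}. Setting $h(\bet,\gam)=\sum_{1\le u\le X}e(\bet u+\gam u^3)$, the inner sum factors as $h(\alp_1,\alp_3)h(\alp_2,\alp_3)$, so the integral under study equals
$$I(X):=\int_{[0,1)^3}|h(\alp_1,\alp_3)|^5|h(\alp_2,\alp_3)|^5\d\bfalp.$$
For the upper bound $I(X)\ll X^5$, the idea is to apply Cauchy--Schwarz to reduce to an integral estimated by Theorem~\ref{theorem1.1}; the lower bound $I(X)\gg X^5$ will follow from the contribution of a small box near the origin.

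For the upper bound I would write $|h_1|^5|h_2|^5=(|h_1|^6|h_2|^4)^{1/2}(|h_1|^4|h_2|^6)^{1/2}$, where $h_j$ abbreviates $h(\alp_j,\alp_3)$. Cauchy--Schwarz, followed by the identity $\int|h_1|^4|h_2|^6\d\bfalp=\int|h_1|^6|h_2|^4\d\bfalp$ obtained by interchanging $\alp_1$ and $\alp_2$, yields
$$I(X)\le \int_{[0,1)^3}|h_1|^6|h_2|^4\d\bfalp.$$
By orthogonality, the integral on the right counts positive integer tuples in $[1,X]^{10}$ satisfying a signed version of (\ref{1.1}) in which three of the first six and two of the last four variables carry a minus sign. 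Flipping these signs, such solutions form precisely the sign-pattern subcount of $N(X)$ in which five variables lie in $[-X,-1]$ and the remaining five in $[1,X]$, and so the count is $\le N(X)\ll X^5$ by Theorem~\ref{theorem1.1}.

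For the lower bound I would restrict integration to the box $\calB=\{\bfalp\in[0,1)^3:\alp_1,\alp_2\le c/X,\ \alp_3\le c/X^3\}$, taking $c>0$ so small that $\cos(4\pi c)\ge\tfrac12$. On $\calB$ and for $1\le u\le X$ one has $|\alp_j u+\alp_3u^3|\le 2c$, whence $\mathrm{Re}\,h(\alp_j,\alp_3)\ge \tfrac12 X$ and so $|h_j|\gg X$. The integrand is therefore $\gg X^{10}$ throughout $\calB$, which has measure $\asymp X^{-5}$, yielding $I(X)\gg X^5$. The argument is essentially mechanical once the factorisation has been observed; the only step requiring any care is the identification of $\int|h_1|^6|h_2|^4\d\bfalp$ with a sign-pattern contribution to $N(X)$, and this is routine.
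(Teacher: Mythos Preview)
Your proof is correct and follows essentially the same route as the paper: the paper likewise factorises the sum as a product of two one-variable Weyl sums, obtains the lower bound by restricting to a small box near the origin, and for the upper bound uses the pointwise inequality $|h_1|^5|h_2|^5\le |h_1|^6|h_2|^4+|h_1|^4|h_2|^6$ (AM--GM in place of your Cauchy--Schwarz) together with orthogonality to bound by $2N(X)$. The only cosmetic difference is your use of Cauchy--Schwarz plus symmetry, which yields the same integral without the factor~$2$.
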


This conclusion is related to recent work of Bourgain and Demeter \cite{BD2015}. Given 
complex numbers $\gra_{x,y}$, their work establishes the upper bound
$$\int_{[0,1)^3}\biggl| \sum_{1\le x,y\le X}\gra_{x,y}e(\alp_1x+\alp_2y+
\alp_3(x^2+y^2))\biggr|^4\d\bfalp \ll X^\eps \biggl( \sum_{1\le x,y\le X}|\gra_{x,y}|^2
\biggr)^2.$$
With care, one would extract the same conclusion when the binary form $x^2+y^2$ is 
replaced by $x^3+y^3$, and this would deliver the upper bound
$$\int_{[0,1)^3} \biggl| \sum_{1\le x,y\le X}e(\alp_1x+\alp_2y+\alp_3(x^3+y^3))
\biggr|^4\d\bfalp \ll X^{4+\eps}.$$
The conclusion of Corollary \ref{corollary1.2} implies the more general upper bound
\begin{equation}\label{1.6}
\int_{[0,1)^3} \biggl| \sum_{1\le x,y\le X}e(\alp_1x+\alp_2y+\alp_3(x^3+y^3))
\biggr|^s\d\bfalp \asymp X^s+X^{2s-5}
\end{equation}
for all non-negative exponents $s$. Indeed, when $s>5$, little additional effort is required 
to obtain an asymptotic formula for this mean value. It would be desirable to obtain an 
analogue of the upper bound implicit in (\ref{1.6}) in which general complex weights are 
present, thereby providing a strengthening of the conclusions of Bourgain and Demeter 
\cite{BD2015} in this context.\par

After a brief detour in \S2 in which we establish the proof of Corollary \ref{corollary1.2} 
assuming the conclusion of Theorem \ref{theorem1.1}, we attend to the main focus of this 
paper, namely the confirmation of the asymptotic formula for $N(B)$ presented in the latter 
theorem. This is achieved by means of the Hardy-Littlewood method in \S\S3-9. We begin 
in \S3 by describing the infrastructure for our slightly non-standard application of the circle 
method. The minor arc integral is handled in \S4 by utilising what, elsewhere, we have 
referred to as a complification process. Preliminary work on the major arc integral in \S5 is 
followed in \S\S6 and 7 by the completion of the truncated singular integral and singular 
series, respectively. We evaluate these completions in \S\S8 and 9, thereby completing the 
proof of Theorem \ref{theorem1.1}. The evaluation of the completed singular integral in 
\S9 is pursued in some detail in order to make transparent the connection between the 
formal singular integral and the quantity $\chi_\infty$ defined via the Siegel volume in 
(\ref{1.3}). Our treatment should be of sufficient independent interest that scholars may 
find it to be of utility in quite general analyses employing the circle method.\par

Our basic parameter is $B$, a sufficiently large positive number. Whenever $\eps$ appears 
in a statement, either implicitly or explicitly, we assert that the statement holds for each 
$\eps>0$. In this paper, implicit constants in Vinogradov's notation $\ll$ and $\gg$ may 
depend on $\eps$. We write $X\asymp Y$ when $X\ll Y\ll X$. We make frequent use of 
vector notation in the form $\bfx=(x_1,\ldots,x_r)$. Here, the dimension $r$ depends on 
the course of the argument. As is conventional in analytic number theory, we write 
$e(z)$ for $e^{2\pi iz}$, and, when $q\in \dbN$, we put $e_q(z)=e^{2\pi iz/q}$.\par

\noindent {\bf Acknowledgements:} The authors acknowledge support by Akademie der 
Wissenschaften zu G\"ottingen and Deutsche Forschungsgemeinschaft. The second 
author's work was supported by a European Research Council Advanced Grant under the 
European Union's Horizon 2020 research and innovation programme via grant agreement 
No.~695223. 

\section{Deduction of the corollary} We first establish the lower bound implicit in the 
conclusion of Corollary \ref{corollary1.2}. Let
$$g(\alp,\bet)=\sum_{1\le x\le X}e(\alp x^3+\bet x)$$
and write $G(\bfalp)=g(\alp_1,\alp_2)g(\alp_1,\alp_3)$, so that
$$G(\bfalp)=\sum_{1\le x,y\le X}e(\alp_1(x^3+y^3)+\alp_2x+\alp_3y).$$
Also, put $\tau=1/24$, and define the box
$$\grB=[0,\tau X^{-3}]\times [0,\tau X^{-1}]\times [0,\tau X^{-1}].$$
Then whenever $1\le x,y\le X$ and $\bfalp\in \grB$, one has
$$0\le \alp_1(x^3+y^3)+\alp_2x+\alp_3y\le 4\tau,$$
whence
$$|G(\bfalp)| \ge \lfloor X\rfloor^2\cos (8\pi \tau)=
{\textstyle{\frac{1}{2}}}\lfloor X\rfloor^2.$$
Thus,
$$\int_\grB |G(\bfalp)|^5\d\bfalp \gg X^{10}\text{mes}(\grB)\asymp X^5,
$$
and the desired lower bound follows.\par

In order to derive the corresponding upper bound from Theorem \ref{theorem1.1}, we 
begin by noting that
$$|G(\bfalp)|^5\le |g(\alp_1,\alp_2)^4g(\alp_1,\alp_3)^6|+
|g(\alp_1,\alp_2)^6g(\alp_1,\alp_3)^4|.$$
By integrating over the unit cube and applying orthogonality, we thus obtain
$$\int_{[0,1)^3}|G(\bfalp)|^5\d\bfalp \le 2N(X),$$
and the desired upper bound follows from Theorem \ref{theorem1.1}. Having established 
Corollary \ref{corollary1.2}, the inquisitive reader will find the proof of (\ref{1.6}) to be 
routine via suitable applications of H\"older's inequality in combination with the trivial 
estimate $|G(\bfalp)|\le X^2$.

\section{Preliminary manoeuvres} We now embark on our main mission of establishing 
Theorem \ref{theorem1.1}. We begin by introducing the exponential sum
$$f(\alp_1,\alp_2)=\sum_{|x|\le B}e(\alp_1x^3+\alp_2x).$$
Notice that by a change of variables, one has
$${\overline{f(\alp_1,\alp_2)}}=f(-\alp_1,-\alp_2)=f(\alp_1,\alp_2),$$
and hence $f(\alp_1,\alp_2)$ is real. In particular, one has
\begin{equation}\label{3.1}
|f(\alp_1,\alp_2)|^2=f(\alp_1,\alp_2)f(-\alp_1,-\alp_2)=f(\alp_1,\alp_2)^2.
\end{equation}
We shall feel free in what follows to insert or remove absolute values around even powers 
of such generating functions, implicitly making use of the relations (\ref{3.1}), without 
further comment. Then, by orthogonality, one has
\begin{equation}\label{3.2}
N(B)=\int_{[0,1)^3}f(\alp,\bet)^6f(\alp,\gam)^4\d\bfalp ,
\end{equation}
where we use $\bfalp$ to denote $(\alp,\bet,\gam)$.\par

We analyse $N(B)$ by means of the circle method, and this entails a certain 
Hardy-Littlewood dissection of non-standard type. Put $\del=1/9$, and let $\grM$ denote 
the union of the intervals
$$\grM(q,a)=\{ \alp\in [0,1): |\alp-a/q|\le B^{\del-3}\},$$
with $0\le a\le q\le B^\del$ and $(a,q)=1$. Complementary to this set of major arcs are 
the minor arcs $\grm=[0,1)\setminus \grM$. Thus, on writing
\begin{equation}\label{3.3}
N(B;\grB)=\int_\grB \int_0^1\int_0^1 f(\alp,\bet)^6f(\alp,\gam)^4\d\gam \d\bet \d\alp ,
\end{equation}
it follows that
\begin{equation}\label{3.4}
N(B)=N(B;\grM)+N(B;\grm).
\end{equation}

\par Next we introduce the auxiliary integral
\begin{equation}\label{3.5}
u(n;\grB)=\int_\grB \int_0^1 f(\alp,\bet)^6e(-n\alp)\d\bet \d\alp .
\end{equation}
Write also $v(n)$ for the number of representations of the integer $n$ in the form
$$n=x_1^3+x_2^3+x_3^3+x_4^3,$$
with
\begin{equation}\label{3.6}
-B\le x_1,\ldots ,x_4\le B\quad \text{and}\quad x_1+x_2+x_3+x_4=0.
\end{equation}
Thus, we have
$$\sum_{|n|\le 4B^3}u(n;\grB)v(n)=\sum_{x_1,\ldots ,x_4}\int_\grB\int_0^1 
f(\alp,\bet)^6e\left(-(x_1^3+x_2^3+x_3^3+x_4^3)\alp\right)\d\bet \d\alp ,$$
in which the summation over $x_1,\ldots ,x_4$ is again subject to the conditions 
(\ref{3.6}). Hence, by employing orthogonality and recalling the notation (\ref{3.3}), one 
infers that
\begin{equation}\label{3.7}
N(B;\grB)=\sum_{|n|\le 4B^3}u(n;\grB)v(n).
\end{equation}
We compute $N(B;\grm)$ in the next section, and $N(B;\grM)$ in \S\S5--9.

\section{The minor arcs} The point of departure in our analysis of the minor arcs is the 
relation (\ref{3.7}), and here we isolate the term with $n=0$ for special attention. Note 
that $v(0)$ counts the number of integral solutions of the system
\begin{align*}
x_1^3+x_2^3+x_3^3+x_4^3&=0\\
x_1+x_2+x_3+x_4&=0,
\end{align*}
with $|x_i|\le B$ $(1\le i\le 4)$. There are $(2B)^2+O(B)$ integral solutions with 
$x_1+x_2=x_3+x_4=0$, and when $x_1+x_2$ is non-zero, one sees that
$$x_1^2-x_1x_2+x_2^2=x_3^2-x_3x_4+x_4^2,$$
whence $x_1x_2=x_3x_4$. In this second class of solutions, one therefore has 
$\{x_1,x_2\}=\{-x_3,-x_4\}$, and there are $2(2B)^2+O(B)$ such integral solutions. We 
may thus conclude that
\begin{equation}\label{4.1}
v(0)=3(2B)^2+O(B).
\end{equation}

\par Meanwhile, by orthogonality $u(0;[0,1))$ counts the number of integral solutions of 
the system
\begin{equation}\label{4.2}
\sum_{i=1}^6x_i^3=\sum_{i=1}^6x_i=0,
\end{equation}
with $|x_i|\le B$ $(1\le i\le 6)$. By the methods of \cite{dlB2007} and \cite{VW1995}, one 
therefore has
\begin{equation}\label{4.3}
u(0;[0,1)) =15(2B)^3+O\left(B^2(\log B)^5\right) .
\end{equation}
A few words of explanation are required to justify this assertion. What these methods show 
is that there are $O\left(B^2(\log B)^5\right)$ solutions of (\ref{4.2}) with $|x_i|\le B$ not 
lying on the linear spaces defined by the equations
$$x_{j_1}+x_{j_2}=x_{j_3}+x_{j_4}=x_{j_5}+x_{j_6}=0,$$
wherein $\{j_1,j_2,\ldots ,j_6\}=\{1,2,\ldots ,6\}$. The number of such linear spaces is
$$\frac{1}{3!}\binom{6}{2}\binom{4}{2}=15,$$
and each contains $(2B)^3+O(B^2)$ integral points. This confirms the asymptotic formula 
(\ref{4.3}).\par

An asymptotic formula for $u(n;\grm)$ is obtained from a crude upper bound for 
$u(n;\grM)$, which we now derive. Observe that $\text{mes}(\grM)=O(B^{3\del-3})$, 
and by orthogonality one has
\begin{align*}
u(0;\grM)&=\int_\grM \sum_{\substack{x_1+\ldots +x_6=0\\ -B\le x_1,\ldots ,x_6\le B}}
e\left( \alp (x_1^3+\ldots +x_6^3)\right) \d\alp \\
&\ll B^5\,\text{mes}(\grM)\ll B^{2+3\del}.
\end{align*}  
Consequently,
$$u(0;\grm)=u(0;[0,1))-u(0;\grM)=15(2B)^3+O(B^{2+3\del}),$$
and it follows from (\ref{4.1}) that
\begin{equation}\label{4.4}
u(0;\grm)v(0)=45(2B)^5+O(B^{4+3\del}).
\end{equation}

\par When $n$ is non-zero, it follows from its definition that $v(n)$ is bounded above by 
the number of integral solutions of the equation
$$n=(x_1^3+x_2^3+x_3^3)-(x_1+x_2+x_3)^3,$$
and hence of
$$n=-3(x_1+x_2)(x_2+x_3)(x_3+x_1).$$
By employing an elementary estimate for the divisor function, one sees that for a fixed 
non-zero integer $n$, there are $O(n^\eps)$ possible choices for $x_1+x_2$, $x_2+x_3$ 
and $x_3+x_1$, and hence also for $x_1$, $x_2$ and $x_3$. Thus $v(n)=O(n^\eps)$. It 
therefore follows from (\ref{3.5}) via the inequalities of Cauchy and Bessel that
\begin{align*}
\Biggl( \sum_{0<|n|\le 4B^3}u(n;\grm)v(n)\Biggr)^2&\ll B^{3+\eps}\sum_{|n|\le 4B^3}
|u(n;\grm)|^2\\
&\ll B^{3+\eps} \int_\grm \biggl( \int_0^1 f(\alp,\bet)^6\d\bet \biggr)^2\d\alp \\
&\ll B^{3+\eps} \int_\grm \int_0^1\int_0^1 f(\alp,\bet)^6f(\alp,\gam)^6 
\d\gam\d\bet\d\alp .
\end{align*}

\par As a consequence of Weyl's inequality (see \cite[Lemma 2.5]{Vau1997}), one sees 
that
$$\sup_{\alp\in\grm}\sup_{\gam\in \dbR} |f(\alp,\gam)|\ll B^{1-\del/4+\eps}.$$
Thus, by reference to (\ref{3.2}), we deduce that
\begin{align*}
\Biggl( \sum_{0<|n|\le 4B^3}u(n;\grm)v(n)\Biggr)^2&\ll B^{3+\eps}(B^{1-\del/4})^2
\int_{[0,1)^3}f(\alp,\bet)^6f(\alp,\gam)^4\d\bfalp \\
&\ll B^{5-\del/3}N(B),
\end{align*}
whence
$$\sum_{0<|n|\le 4B^3}u(n;\grm)v(n)\ll B^{5/2-\del/6}N(B)^{1/2}.$$
On recalling (\ref{3.7}) and (\ref{4.4}), we conclude that
\begin{equation}\label{4.5}
N(B;\grm)=45(2B)^5+O\left( B^{4+3\del}+B^{5/2-\del/6}N(B)^{1/2}\right).
\end{equation}

\section{The major arcs} We begin by extending our definition of one dimensional major 
arcs to corresponding three dimensional arcs. When $0\le a,b,c\le q\le B^\del$ and 
$(a,q)=1$, let $\grN(q,a,b,c)$ denote the set of triples $(\alp,\bet,\gam)\in [0,1)^3$ such 
that $\alp\in \grM(q,a)$,
$$-\frac{1}{2q}\le \bet -\frac{b}{q}<\frac{1}{2q}\quad \text{and}\quad -\frac{1}{2q}\le 
\gam -\frac{c}{q}<\frac{1}{2q}.$$
We observe that these boxes $\grN(q,a,b,c)$ are disjoint, and hence their union $\grN$ is 
equal to $\grM\times [0,1)^2$.\par

We next introduce the standard major arc approximants to $f(\alp_1,\alp_2)$. When 
$a_1,a_2\in \dbZ$ and $q\in \dbN$, we put
$$S(q,a_1,a_2)=\sum_{r=1}^qe_q(a_1r^3+a_2r).$$
Likewise, when $\bet_1,\bet_2\in \dbR$, we write
$$v(\bet_1,\bet_2;B)=\int_{-B}^B e(\bet_1\gam^3+\bet_2\gam)\d\gam .$$
It is convenient in what follows to abbreviate $v(\bet_1,\bet_2;B)$ to $v(\bet_1,\bet_2)$, 
and $v(\bet_1,\bet_2;\tfrac{1}{2})$ to $v_1(\bet_1,\bet_2)$. Finally, when 
$(\alp,\bet,\gam)\in \grN(q,a,b,c)\subseteq \grN$, we write
\begin{equation}\label{5.a1}
f^*(\alp,\bet)=q^{-1}S(q,a,b)v(\alp-a/q,\bet-b/q)
\end{equation}
and
\begin{equation}\label{5.a2} 
f^*(\alp,\gam)=q^{-1}S(q,a,c)v(\alp-a/q,\gam-c/q).
\end{equation}
Notice that the cosmetic ambiguity in this definition would arise only when $\bet=\gam$, in 
which case the two definitions coincide. Then, by appealing to the final conclusion 
(2.4) of \cite[Theorem 3]{BR2015}, we see that when $(\alp,\bet,\gam)\in \grN(q,a,b,c)
\subseteq \grN$, one has
\begin{equation}\label{5.1}
f(\alp,\bet)-f^*(\alp,\bet)\ll q^{2/3+\eps}\le B^\del
\end{equation}
and
\begin{equation}\label{5.2}
f(\alp,\gam)-f^*(\alp,\gam)\ll q^{2/3+\eps}\le B^\del .
\end{equation}

\par Our first lemma relates $N(B;\grM)$ to the mean value
\begin{equation}\label{5.c}
N^*(B)=\int_\grN f^*(\alp,\bet)^6f^*(\alp,\gam)^4\d\bfalp .
\end{equation}

\begin{lemma}\label{lemma5.1} One has $N(B;\grM)-N^*(B)\ll B^{5-\del}$.
\end{lemma}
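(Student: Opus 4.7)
The plan is to exploit the announced product decomposition $\grN=\grM\times[0,1)^2$, which allows us to rewrite
$$N(B;\grM)=\int_\grN f(\alp,\bet)^6f(\alp,\gam)^4\d\bfalp,$$
so that by (\ref{5.c}) the quantity $N(B;\grM)-N^*(B)$ equals the integral over $\grN$ of the difference $f(\alp,\bet)^6f(\alp,\gam)^4-f^*(\alp,\bet)^6f^*(\alp,\gam)^4$. Writing $f=f(\alp,\bet)$, $g=f(\alp,\gam)$, and analogously $f^*$ and $g^*$, I would split this difference by the standard telescoping identity
$$f^6g^4-f^{*6}g^{*4}=(f-f^*)\Biggl(\sum_{j=0}^{5}f^jf^{*5-j}\Biggr)g^4+f^{*6}(g-g^*)\Biggl(\sum_{j=0}^{3}g^jg^{*3-j}\Biggr).$$
Combining the pointwise approximations (\ref{5.1})--(\ref{5.2}), which give $|f-f^*|,|g-g^*|\ll B^\del$ on $\grN$, with the trivial $L^\infty$ bounds $|f|,|g|,|f^*|,|g^*|\ll B$, the integrand is dominated by a constant multiple of
$$B^\del\bigl(|f(\alp,\bet)|^5|f(\alp,\gam)|^4+|f(\alp,\bet)|^6|f(\alp,\gam)|^3\bigr).$$

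Next, the product structure of $\grN$ allows one to factor each of the remaining integrals; for exponents $a,b\ge 0$,
$$\int_\grN|f(\alp,\bet)|^a|f(\alp,\gam)|^b\d\bfalp=\int_\grM\biggl(\int_0^1|f(\alp,\bet)|^a\d\bet\biggr)\biggl(\int_0^1|f(\alp,\gam)|^b\d\gam\biggr)\d\alp.$$
The key technical input is the elementary diagonal bound
$$\int_0^1|f(\alp,\bet)|^4\d\bet\le\#\{\bfx\in[-B,B]^4:x_1+x_2=x_3+x_4\}\ll B^3,$$
obtained by orthogonality in $\bet$. Together with $\int_0^1|f|^2\d\bet\ll B$ and the trivial bound $|f|\ll B$, this yields $\int_0^1|f|^3\d\bet\ll B^2$ and $\int_0^1|f|^5\d\bet\ll B^4$; the analogous diagonal count for $x_1+\ldots+x_6=0$ gives $\int_0^1|f|^6\d\bet\ll B^5$. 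Since $\text{mes}(\grM)\ll B^{3\del-3}$, each of the two relevant integrals is therefore bounded by $B^7\cdot B^{3\del-3}=B^{4+3\del}$.

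Multiplying by the outer factor $B^\del$ gives $N(B;\grM)-N^*(B)\ll B^{4+4\del}$, and with $\del=1/9$ this is $B^{40/9}\le B^{5-\del}$, which is the desired conclusion. The one point worth checking carefully is the product decomposition $\grN=\grM\times[0,1)^2$ itself, since it is this structure that reduces the estimation to independent one-dimensional mean values in the $\bet$ and $\gam$ directions; granted this, the argument is a routine combination of the pointwise Weyl-type approximants (\ref{5.1})--(\ref{5.2}) with elementary $L^4$ counting, and requires no deeper mean-value input.
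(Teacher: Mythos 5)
Your plan is a genuine and, in principle, more elementary route than the paper's: you exploit the product structure $\grN=\grM\times[0,1)^2$ to factor the integral as $\int_\grM\bigl(\int_0^1|f(\alp,\bet)|^a\d\bet\bigr)\bigl(\int_0^1|f(\alp,\gam)|^b\d\gam\bigr)\d\alp$ and then use the uniform-in-$\alp$ diagonal bounds $\int_0^1|f(\alp,\bet)|^{2k}\d\bet\ll B^{2k-1}$. This avoids the machinery the paper invokes (Hua's inequality for the sixth moment, the tenth-moment bound, and H\"older) and, when carried out correctly, even yields the slightly sharper estimate $O(B^{4+4\del})$. The paper instead expands $(f+O(B^\del))^6(g+O(B^\del))^4$ directly, extends the integral from $\grN$ to all of $[0,1)^3$, and appeals to known mean-value theorems; both routes reach the stated $O(B^{5-\del})$.

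However, there is a genuine gap in the step where you claim the integrand is dominated by a constant multiple of $B^\del\bigl(|f(\alp,\bet)|^5|f(\alp,\gam)|^4+|f(\alp,\bet)|^6|f(\alp,\gam)|^3\bigr)$. In the telescoping sum $\sum_{j=0}^5 f^jf^{*\,5-j}$, the $j=0$ term is $f^{*5}$, and bounding $|f^*|\ll B$ (as you propose via the ``trivial $L^\infty$ bounds'') yields $B^\del\cdot B^5\cdot|f(\alp,\gam)|^4$, which after integration over $\grN$ contributes $\ll B^{5+\del}\cdot B^3\cdot\text{mes}(\grM)\ll B^{5+4\del}$ --- larger than the target. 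Indeed, at points where $f(\alp,\bet)$ is small the integrand is not controlled by $|f(\alp,\bet)|^5$, so the claimed domination is false as stated. The fix is to use $|f^*|\le|f|+O(B^\del)$ from (\ref{5.1}) rather than the crude $|f^*|\ll B$, which gives $|f^*|^{5-j}\ll|f|^{5-j}+B^{\del(5-j)}$, and similarly for $g^*$. This produces your two claimed terms \emph{plus} lower-order terms of the shape $B^{(1+k)\del}|f(\alp,\bet)|^{6-k}|f(\alp,\gam)|^{4-l}$ with $k+l\ge 1$, which integrate over $\grN$ to at most $B^{4+4\del}$ as well, and the argument then closes. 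So your approach is sound and in fact elegant, but the key domination must be stated and justified more carefully; as written, the stated bound on the integrand is incorrect.
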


\begin{proof} Suppose that $(\alp,\bet,\gam)\in \grN$. Then, by making use of the trivial 
estimate $f(\alp_1,\alp_2)=O(B)$ in combination with (\ref{5.1}) and (\ref{5.2}), we see 
that
$$f^*(\alp,\bet)^6f^*(\alp,\gam)^4=\left( f(\alp,\bet)+O(B^\del)\right)^6\left( 
f(\alp,\gam)+O(B^\del)\right)^4,$$
whence
\begin{equation}\label{5.3}
f^*(\alp,\bet)^6f^*(\alp,\gam)^4-f(\alp,\bet)^6f(\alp,\gam)^4\ll B^\del T_1(\bfalp)
+B^{2\del }T_2(\bfalp)+B^{7+3\del},
\end{equation}
where
$$T_1(\bfalp)=|f(\alp,\bet)^5f(\alp,\gam)^4|+|f(\alp,\bet)^6f(\alp,\gam)^3|$$
and
$$T_2(\bfalp)=|f(\alp,\bet)^4f(\alp,\gam)^4|+|f(\alp,\bet)^5f(\alp,\gam)^3|+
|f(\alp,\bet)^6f(\alp,\gam)^2|.$$
Making use of the trivial estimate $|z_1\cdots z_n|\le |z_1|^n+\ldots +|z_n|^n$, we find 
that
$$T_1(\bfalp)\ll |f(\alp,\bet)^7f(\alp,\gam)^2|+|f(\alp,\bet)^2f(\alp,\gam)^7|$$
and
$$T_2(\bfalp)\ll f(\alp,\bet)^6f(\alp,\gam)^2+f(\alp,\bet)^2f(\alp,\gam)^6.$$

\par Integrating $T_1(\bfalp)$ over $\bfalp\in \grN$ and invoking symmetry, we obtain
$$\int_\grN T_1(\bfalp)\d\bfalp \ll \int_0^1 \int_0^1 |f(\alp,\bet)|^7\int_0^1 
|f(\alp,\gam)|^2\d\gam \d\bet \d\alp .$$
By orthogonality, therefore, together with an application of H\"older's inequality, we see that
\begin{align*}
\int_\grN T_1(\bfalp)\d\bfalp &\ll B\int_0^1\int_0^1 |f(\alp,\bet)|^7\d\bet \d\alp\\
&\ll B\Ups_6^{3/4}\Ups_{10}^{1/4},
\end{align*}
where for even exponents $t$ we write
$$\Ups_t=\int_0^1\int_0^1 f(\alp,\bet)^t\d\bet \d\alp .$$
The case $k=3$ of \cite[Lemma 5.2]{Hua1965} delivers the bound
\begin{equation}\label{5.4}
\Ups_6\ll B^{3+\eps},
\end{equation}
and by equation (2) in \cite[\S5 of Chapter V]{Hua1965}, meanwhile, one has 
$\Ups_{10}\ll B^{6+\eps}$. Note that these sources in fact count solutions of the 
underlying Diophantine equations in which the variables are positive, whereas our bounds 
assert that the number of solutions, both positive and negative, be so bounded. The reader 
should have no difficulty, however, in either adapting the methods underlying these cited 
bounds, or indeed deriving the stated results through application of the triangle inequality. 
Improved bounds for the former mean value are the subject 
of \cite{VW1995}, whilst the second is handled more precisely in \cite{BR2015} and 
\cite{Woo2015}. 
Thus we obtain the estimate
$$\int_\grN T_1(\bfalp)\d\bfalp \ll B^{1+\eps}(B^3)^{3/4}(B^6)^{1/4}\ll B^{5-2\del}.$$

\par Similarly, in view of (\ref{5.4}), one deduces that
\begin{align*}
\int_\grN T_2(\bfalp)\d\bfalp &\ll \int_0^1\int_0^1 f(\alp,\bet)^6\int_0^1 f(\alp,\gam)^2
\d\gam \d\bet \d\alp \\
&\ll B \int_0^1\int_0^1 f(\alp,\bet)^6\d\bet \d\alp \ll B^{4+\eps}.
\end{align*}
Since, in addition, one has $\text{mes}(\grM)\ll B^{2\del-3}$, we conclude from (\ref{5.c}) 
and (\ref{5.3}) that
\begin{align*}
N^*(B)-N(B;\grM)&\ll B^\del (B^{5-2\del})+B^{2\del}(B^{4+\eps})+B^{7+3\del}
(B^{3\del-3})\\
&\ll B^{5-\del}.
\end{align*}
This completes the proof of the lemma.
\end{proof}

On combining (\ref{4.5}) and the conclusion of Lemma \ref{lemma5.1} by means of 
(\ref{3.4}), we may conclude thus far that
\begin{equation}\label{5.5}
N(B)-N^*(B)-45(2B)^5\ll B^{5/2-\del/6}N(B)^{1/2}+B^{5-\del}.
\end{equation}
The remainder of this paper will be consumed by the task of showing that
$$N^*(B)=\calC (2B)^5+O(B^{5-\del/20}).$$
As is evident from (\ref{5.5}), this asymptotic relation suffices to confirm that
$$N(B)-(45+\calC)(2B)^5\ll B^{5/2-\del/6}N(B)^{1/2}+B^{5-\del/20},$$
whence
\begin{equation}\label{5.5a}
N(B)=(45+\calC)(2B)^5+O(B^{5-\del/20}).
\end{equation}
This confirms (\ref{1.5}) and, subject to verifying that $\calC>0$, completes the proof of 
Theorem \ref{theorem1.1}.\par

Before proceeding further, we introduce the truncated singular integral
\begin{equation}\label{5.6}
I(q)=\int_{-B^{\del-3}}^{B^{\del-3}}\int_{-1/(2q)}^{1/(2q)}\int_{-1/(2q)}^{1/(2q)} 
v(\xi,\eta)^6v(\xi,\zet)^4\d\zet \d\eta \d\xi
\end{equation}
and the auxiliary sum
\begin{equation}\label{5.7}
A(q)=\sum^q_{\substack{a=1\\ (a,q)=1}}\sum_{b=1}^q
\sum_{c=1}^q q^{-10}S(q,a,b)^6S(q,a,c)^4.
\end{equation}
Then in view of the definitions (\ref{5.a1}), (\ref{5.a2}) and (\ref{5.c}), we may write
\begin{equation}\label{5.8}
N^*(B)=\sum_{1\le q\le B^\del} I(q)A(q).
\end{equation}

\section{The completion of the singular integral} Our next step is to complete the truncated 
singular integral defined in (\ref{5.6}) so as to obtain the completed singular integral
\begin{equation}\label{6.1}
\grJ(B)=\int_{\dbR^3}v(\xi,\eta)^6v(\xi,\zet)^4\d\zet \d\eta \d\xi .
\end{equation}
In pursuit of this goal, we recall from \cite[Theorem 7.3]{Vau1997} the bound
$$v(\alp_1,\alp_2)\ll B(1+|\alp_2|B+|\alp_1|B^3)^{-1/3}.$$
This delivers the estimate
$$v(\xi,\eta)^6v(\xi,\zet)^4\ll B^{10}\left((1+|\xi|B^3)(1+|\eta|B)(1+|\zet|B)
\right)^{-10/9}.$$
In particular, it is apparent from (\ref{6.1}) that the singular integral $\grJ(B)$ exists, and 
that
\begin{equation}\label{6.2}
\grJ(B)\ll B^5.
\end{equation}
Also, when $1\le q\le B^\del$, one sees that whenever
$$(\xi,\eta, \zet)\in \dbR^3\setminus \left([-B^{\del-3},B^{\del-3}]\times 
[-1/(2q),1/(2q)]^2\right),$$
then
$$(1+|\xi|B^3)(1+|\eta|B)(1+|\zet|B)>B^\del .$$
Hence,
\begin{align}
\grJ(B)-I(q)&\ll B^{10-\del/18}\int_{\dbR^3}\left( (1+|\xi|B^3)(1+|\eta|B)(1+|\zet|B)
\right)^{-19/18}\d\zeta \d\eta\d\xi \notag \\
&\ll B^{5-\del/18}.\label{6.3}
\end{align}

\par We summarise (\ref{6.2}) and (\ref{6.3}) in the form of a lemma.

\begin{lemma}\label{lemma6.1} When $1\le q\le B^\del$, one has
$$I(q)\ll B^5\quad \text{and}\quad I(q)=\grJ(B)+O(B^{5-\del/18}).$$
\end{lemma}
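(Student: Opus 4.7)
\textbf{Proof plan for Lemma~\ref{lemma6.1}.} The two claims are essentially a summary of the estimates (\ref{6.2}) and (\ref{6.3}) already derived in the preceding discussion. Since the second estimate combined with $\grJ(B)\ll B^5$ immediately yields $I(q)\ll B^5$, the task reduces to establishing (\ref{6.2}) and (\ref{6.3}), and it is convenient to organise the whole argument around a single pointwise bound.

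The starting point is the estimate $v(\alp_1,\alp_2)\ll B(1+|\alp_2|B+|\alp_1|B^3)^{-1/3}$ imported from \cite[Theorem 7.3]{Vau1997}. Abbreviating $P=P(\xi,\eta,\zet)=(1+|\xi|B^3)(1+|\eta|B)(1+|\zet|B)$, and noting that $v$ is real (by the substitution $\gam\mapsto -\gam$ in its defining integral) so that both $v(\xi,\eta)^6$ and $v(\xi,\zet)^4$ are non-negative, I would obtain
$$0\le v(\xi,\eta)^6v(\xi,\zet)^4\ll B^{10}P^{-10/9}.$$
The integral $\int_{\dbR^3}P^{-10/9}\d\zet\d\eta\d\xi$ separates as a product of three one-dimensional integrals; the changes of variable $u=\xi B^3$, $w=\eta B$, $z=\zet B$ reduce each factor to $\int_\dbR (1+|t|)^{-10/9}\d t$, which converges since $10/9>1$. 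This delivers $\int_{\dbR^3}P^{-10/9}\ll B^{-3}\cdot B^{-1}\cdot B^{-1}=B^{-5}$, hence $\grJ(B)\ll B^5$, confirming (\ref{6.2}).

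For the completion error, the key observation is that outside the box integrated over in (\ref{5.6}) at least one of $|\xi|>B^{\del-3}$, $|\eta|>1/(2q)$, $|\zet|>1/(2q)$ holds, and since $q\le B^\del$ this forces $P>B^\del$ for $B$ sufficiently large (using $\del<1/2$). I would then split the exponent as $10/9=1/18+19/18$ and exploit the cut-off: on the complement,
$$P^{-10/9}=P^{-1/18}\cdot P^{-19/18}\le B^{-\del/18}P^{-19/18}.$$
Because $19/18>1$, the same rescaling argument gives $\int_{\dbR^3}P^{-19/18}\ll B^{-5}$, and collecting powers of $B$ yields $\grJ(B)-I(q)\ll B^{10}\cdot B^{-\del/18}\cdot B^{-5}=B^{5-\del/18}$, which is (\ref{6.3}).

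There is no substantive obstacle; the whole argument rests on reserving a small fractional exponent $1/18$ to pay against the lower bound $P>B^\del$ on the complement, leaving an exponent $19/18$ just above $1$ so that the residual integral remains absolutely convergent over all of $\dbR^3$. The choice $1/18$ is not canonical---any $\theta\in(0,1/9)$ would work, at the cost of replacing $\del/18$ by $\theta\del$ in the error term.
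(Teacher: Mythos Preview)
Your proposal is correct and follows essentially the same route as the paper: both derive the pointwise bound $v(\xi,\eta)^6v(\xi,\zet)^4\ll B^{10}P^{-10/9}$ from \cite[Theorem 7.3]{Vau1997}, integrate over $\dbR^3$ to get (\ref{6.2}), and then on the complement of the truncation box use $P>B^\del$ to peel off a factor $P^{-1/18}\le B^{-\del/18}$, leaving $P^{-19/18}$ to be integrated for (\ref{6.3}). Your added remarks (the reality of $v$, the explicit rescaling, the condition $\del<1/2$, and the observation that $I(q)\ll B^5$ follows from the other two estimates) are correct and merely make explicit what the paper leaves implicit.
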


In preparation for the next section of our argument, we introduce the truncated singular 
series
\begin{equation}\label{6.4}
\grS(Q)=\sum_{1\le q\le Q}A(q),
\end{equation}
in which $A(q)$ is defined by (\ref{5.7}). Then, on substituting the conclusion of Lemma 
\ref{lemma6.1} into (\ref{5.8}), we infer that
\begin{equation}\label{6.5}
N^*(B)=\left( \grJ(B)+O(B^{5-\del/18})\right) \grS(B^\del).
\end{equation}

\section{The completion of the singular series} The next phase of our discussion is focused 
on the completion of the truncated singular series (\ref{6.4}), thereby delivering the 
completed singular series
\begin{equation}\label{7.1}
\grS=\sum_{q=1}^\infty A(q).
\end{equation}
This step in our argument makes use of the following auxiliary lemma.

\begin{lemma}\label{lemma7.1} When $(a,q)=1$, one has
$$\sum_{b=1}^q S(q,a,b)^4\ll q^{3+\eps}.$$
\end{lemma}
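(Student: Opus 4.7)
The plan is to execute the sum over $b$ by orthogonality, simplify the resulting exponential sum via a classical cubic identity, and finish with an elementary divisor estimate. Expanding the fourth power and switching the order of summation,
$$\sum_{b=1}^q S(q,a,b)^4=\sum_{\bfr\pmod q}e_q\bigl(a(r_1^3+r_2^3+r_3^3+r_4^3)\bigr)\sum_{b=1}^q e_q\bigl(b(r_1+r_2+r_3+r_4)\bigr).$$
The inner sum equals $q$ when $r_1+r_2+r_3+r_4\equiv 0\pmod q$ and vanishes otherwise. Parametrising by $(r_1,r_2,r_3)\pmod q$ with $r_4\equiv-(r_1+r_2+r_3)$ and invoking the identity $r_1^3+r_2^3+r_3^3-(r_1+r_2+r_3)^3=-3(r_1+r_2)(r_2+r_3)(r_3+r_1)$, one obtains $\sum_{b=1}^q S(q,a,b)^4=qT$, where
$$T=\sum_{r_1,r_2,r_3\pmod q}e_q\bigl(-3a(r_1+r_2)(r_2+r_3)(r_3+r_1)\bigr).$$

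Next I would reparametrise $T$ through the bijection $(r_1,r_2,r_3)\mapsto(s_1,r_2,s_2)$ on $(\dbZ/q\dbZ)^3$ defined by $s_1=r_1+r_2$ and $s_2=r_2+r_3$; under this change of variables the exponent becomes $-3a s_1 s_2(s_1+s_2-2r_2)$. Isolating the linear-in-$r_2$ contribution and summing over $r_2\pmod q$ yields
$$T=q\sum_{\substack{s_1,s_2\pmod q\\ q\mid 6a s_1 s_2}}e_q\bigl(-3a s_1 s_2(s_1+s_2)\bigr).$$
Since $(a,q)=1$, writing $m=q/\gcd(q,6)$ the divisibility condition collapses to $m\mid s_1 s_2$, so $|T|\le q\cdot N$, where $N$ counts pairs $(s_1,s_2)\in[0,q)^2$ with $m\mid s_1 s_2$.

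The bound $N\ll q^{1+\eps}$ is a routine divisor estimate: for fixed $s_1$ the number of admissible $s_2\in[0,q)$ is $q\gcd(s_1,m)/m$, while Pillai's evaluation yields $\sum_{r\pmod m}\gcd(r,m)\ll m^{1+\eps}$. Combined with $m\ge q/6$, this produces $N\ll q^{1+\eps}$, whence $|T|\ll q^{2+\eps}$ and consequently $\sum_{b=1}^q S(q,a,b)^4=qT\ll q^{3+\eps}$, as asserted.

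The sole technical nuisance is the factor of $6$ appearing in the condition $q\mid 6a s_1 s_2$, which comes from the coefficient $-3$ in the cubic identity together with the factor $2$ in the linear-in-$r_2$ term. It is absorbed harmlessly into $m=q/\gcd(q,6)$, obviating any separate treatment of the primes $2$ and $3$; one could alternatively invoke the multiplicativity of $S(q,a,b)$ in $q$ and reduce to prime powers, but the direct route above handles all $q$ uniformly.
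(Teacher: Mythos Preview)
Your proof is correct and follows essentially the same route as the paper: expand by orthogonality, eliminate $r_4$, apply the cubic identity, change variables linearly to expose a geometric sum, and bound the residual count $\#\{(s_1,s_2):q\mid 6s_1s_2\}$ by a divisor estimate. The only cosmetic difference is that the paper keeps $r_3$ rather than $r_2$ as the free variable after the substitution (obtaining $-3auv(2r_3+u-v)$ in place of your $-3as_1s_2(s_1+s_2-2r_2)$), and records the final divisor bound more tersely.
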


\begin{proof} By orthogonality, one has
\begin{align}
q^{-1}\sum_{b=1}^q&\sum_{r_1,\ldots,r_4\mmod{q}}e_q
\left( a(r_1^3+\ldots +r_4^3)+b(r_1+\ldots +r_4)\right) \notag \\
&=\sum_{\substack{r_1,\ldots ,r_4\mmod{q}\\ r_1+\ldots +r_4\equiv 0\mmod{q}}}
e_q\left( a(r_1^3+\ldots +r_4^3)\right) \notag \\
&=\sum_{r_1,r_2,r_3\mmod{q}}e_q\left( -3a(r_1+r_2)(r_2+r_3)(r_3+r_1)\right) .
\label{7.2}
\end{align}

\par On substituting $u=r_1+r_2$ and $v=r_2+r_3$, we see that the right hand side of 
(\ref{7.2}) is equal to
$$\sum_{u,v,r_3\mmod{q}}e_q\left(-3auv(2r_3+u-v)\right) .$$
The sum over $r_3$ here makes a non-zero contribution only when $q|6auv$. Thus, since 
$(a,q)=1$, we find that this expression is bounded above in absolute value by 
$$q\sum_{\substack{u,v\mmod{q}\\ q|6uv}}1\ll q^{2+\eps}.$$
On recalling (\ref{7.2}), the conclusion of the lemma now follows.
\end{proof}

We next recall an estimate due to Hua (see \cite[Theorem 7.1]{Vau1997}) asserting that 
whenever $(a_1,q)=1$, one has
$$S(q,a_1,a_2)\ll q^{2/3+\eps}.$$
On recalling (\ref{5.7}), we deduce via Lemma \ref{lemma7.1} that
\begin{equation}\label{7.3}
A(q)\ll q^{\eps-2/3}\sum^q_{\substack{a=1\\ (a,q)=1}}\biggl( q^{-4}
\sum_{b=1}^qS(q,a,b)^4\biggr)^2\ll q^{3\eps -5/3}.
\end{equation}
It therefore follows from (\ref{6.4}) and (\ref{7.1}) that the singular series 
$\grS=\underset{Q\rightarrow \infty}\lim\grS(Q)$ converges, and moreover that 
$\grS-\grS(B^\del)\ll B^{-\del/2}$. In particular, on recalling Lemma \ref{lemma6.1} and 
(\ref{6.5}), we may conclude thus far that
\begin{equation}\label{7.4}
N^*(B)=\grS \grJ(B)+O(B^{5-\del/18}).
\end{equation}

\section{The evaluation of the singular series} We have yet to interpret the singular series 
as a product of local densities, the first step towards this goal being that of establishing the 
multiplicative nature of $A(q)$. We put
$$\Psi(\bfy)=\sum_{i=1}^8y_i^3-(y_1+\ldots +y_5)^3-(y_6+y_7+y_8)^3,$$
and then set
$$T(q,a)=\sum_{y_1,\ldots ,y_8\mmod{q}}e_q\left(a\Psi(\bfy)\right) .$$
Then, by orthogonality, we may eliminate two variables and infer that
$$q^{-2}\sum_{b=1}^q\sum_{c=1}^qS(q,a,b)^6S(q,a,c)^4=T(q,a).$$
Thus we have
$$\sum^q_{\substack{a=1\\ (a,q)=1}}q^{-8}T(q,a)=A(q).$$
The standard theory of singular series (see 
\cite[Lemmata 2.10 and 2.11]{Vau1997}) therefore shows that $A(q)$ is a multiplicative 
function of $q$.\par

Observe next that when $p$ is prime and $H$ is a non-negative integer, it follows from 
(\ref{7.3}) that
$$\sum_{h=0}^HA(p^h)=1+O(p^{-3/2}).$$
Thus, writing
$$\chitil_p=\sum_{h=0}^\infty A(p^h),$$
we have $\chitil_p=1+O(p^{-3/2})$, and hence the product $\prod_p\chitil_p$ converges. 
But one has
$$p^{7H}\sum_{h=0}^HA(p^h)=p^{-H}\sum_{a=1}^{p^H}T(p^H,a),$$
and by orthogonality, this is equal to the number of solutions of the congruence 
$\Psi(\bfy)\equiv 0\mmod{p^H}$, with $1\le y_i\le p^H$ $(1\le i\le 8)$. Next back 
substituting $y_9=-(y_1+\ldots +y_5)$ and $y_{10}=-(y_6+y_7+y_8)$, it is apparent that 
this, in turn, is equal to $M_p(H)$. Hence
$$\chitil_p=\lim_{H\rightarrow \infty}\sum_{h=0}^HA(p^h)=\lim_{H\rightarrow \infty}
(p^H)^{-7}M_p(H)=\chi_p,$$
where $\chi_p$ is defined as in (\ref{1.2}).\par

Since $A(q)$ is multiplicative and the series $\grS=\sum_{q=1}^\infty A(q)$ is absolutely 
convergent, we deduce that
\begin{equation}\label{8.1}
\grS=\prod_p\sum_{h=0}^\infty A(p^h)=\prod_p\chi_p.
\end{equation}
Here we note that, by the positivity evident in (\ref{5.7}), one has $\chi_p\ge 1$ for each 
prime number $p$, whence $\grS\ge 1$. In particular, the singular series $\grS$ is 
positive.

\section{The evaluation of the singular integral} The evaluation of the singular integral 
$\grJ(B)$ begins with the observation that, by employing a change of variable in 
(\ref{6.1}), one obtains
\begin{equation}\label{9.1}
\grJ(B)=B^5\int_{\dbR^3}v_1(\xi,\eta)^6v_1(\xi,\zet)^4\d\zet\d\eta\d\xi =B^5\grJ(1).
\end{equation}
We next pursue a strategy proposed by Schmidt \cite{Sch1982, Sch1985} in a somewhat 
refined form.\par

When $0<\eta\le 1$, we define the auxiliary function
\begin{equation}\label{9.2}
w_\eta(\beta)=\eta \biggl( \frac{\sin(\pi\eta \bet)}{\pi \eta \beta}\biggr)^2,
\end{equation}
having Fourier transform
\begin{equation}\label{9.3}
\what_\eta(\gam)=\int_{-\infty}^\infty w_\eta(\bet)e(-\bet \gam)\d\bet =
\max \{ 0,1-|\gam|/\eta \}.
\end{equation}
Here, the integral converges absolutely. One can apply the formula (\ref{9.3}) to 
construct a continuous approximation to the indicator function of a box. When 
$0<\del<\eta$, we define
$$W_{\eta,\del}(\gam)=\begin{cases} 1,&\text{when $|\gam|\le \eta$,}\\
1-{\displaystyle{\frac{|\gam|-\eta}{\del}}},&\text{when $\eta<|\gam|<\eta+\del$,}\\
0,&\text{when $|\gam|\ge \eta+\del$.}\end{cases}$$
Next we put
$$W_\eta^+(\gam)=W_{\eta,\eta^2}(\gam)\quad \text{and}\quad W_\eta^-(\gam)=
W_{\eta-\eta^2,\eta^2}(\gam).$$
We observe that $W_\eta^+(\gam)$ and $W_\eta^-(\gam)$ supply upper and lower 
bounds for the characteristic function of the interval $[-\eta,\eta]$.\par

Since it follows from (\ref{9.3}) that
$$W_{\eta,\del}(\gam)=(1+\eta/\del)
\what_{\eta+\del}(\gam)-(\eta/\del)\what_\eta(\gam),$$
we find that
$$W_\eta^+(\gam)=(1+\eta^{-1})\what_{\eta+\eta^2}(\gam)-\eta^{-1}
\what_\eta(\gam)$$
and
$$W_\eta^-(\gam)=\eta^{-1}\what_\eta(\gam)+(1-\eta^{-1})
\what_{\eta-\eta^2}(\gam).$$

\par Next set
$$F_1(\bfxi)=\sum_{i=1}^{10}\xi_i^3,\quad F_2(\bfxi)=\sum_{i=1}^6\xi_i,
\quad F_3(\bfxi)=\sum_{i=7}^{10}\xi_i,$$
and define
\begin{equation}\label{9.3z}
V_\eta^\pm (\bfxi)=\prod_{i=1}^3W_\eta^\pm (F_i(\bfxi)).
\end{equation}
For convenience, we write $\grU$ for the unit box 
$\left[-\tfrac{1}{2},\tfrac{1}{2}\right]^{10}$. Then our discussion thus far demonstrates 
that the volume $M_\infty(\eta)$ defined via (\ref{1.4}) satisfies
\begin{equation}\label{9.4}
\int_\grU V_\eta^-(\bfxi)\d\bfxi \le M_\infty (\eta)\le \int_\grU V_\eta^+(\bfxi)\d\bfxi .
\end{equation}
We now proceed by turning our attention to the Fourier side. Define
\begin{equation}\label{9.4a}
U(\bfeta)=\int_\grU \prod_{i=1}^3 \eta_i^{-1}\what_{\eta_i}(F_i(\bfxi))\d\bfxi .
\end{equation}

\begin{lemma}\label{lemma9.1} Let $\eta$ be a real number with $0<\eta<1$. Suppose 
that $\eta_i$ is a real number with $|\eta_i-\eta|\le \eta^2$ for $i=1,2,3$. Then
$$U(\bfeta)=\grJ(1)+O(\eta^{1/36}).$$
\end{lemma}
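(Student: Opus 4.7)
The plan is to use Fourier inversion to rewrite $U(\bfeta)$ as an integral over $\dbR^3$ of the integrand appearing in $\grJ(1)$ multiplied by a product of Fej\'er-type kernels, and then to bound the resulting error using the decay of $v_1$. Specifically, by Fourier inversion applied to \eqref{9.3} we have
$$\what_\eta(\gam)=\int_{-\infty}^\infty w_\eta(\bet)e(-\bet\gam)\,\d\bet.$$
Substituting this into \eqref{9.4a} and interchanging orders of integration (justified by the absolute integrability of $w_{\eta_i}$ and the boundedness of the inner integrand over $\grU$), the inner integral over $\bfxi\in\grU$ factorizes across the ten coordinates on account of the separable structure of $F_1,F_2,F_3$: the six variables $\xi_1,\dots,\xi_6$ each contribute $v_1(-\bet_1,-\bet_2)=v_1(\bet_1,\bet_2)$, while $\xi_7,\dots,\xi_{10}$ each contribute $v_1(\bet_1,\bet_3)$, with the reality of $v_1$ absorbing the signs. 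Using $\eta^{-1}w_\eta(\bet)=(\sin(\pi\eta\bet)/(\pi\eta\bet))^2$, this gives
$$U(\bfeta)=\int_{\dbR^3}v_1(\bet_1,\bet_2)^6 v_1(\bet_1,\bet_3)^4\prod_{i=1}^3 s_i(\bet_i)\,\d\bfbet,$$
where $s_i(\bet_i)=(\sin(\pi\eta_i\bet_i)/(\pi\eta_i\bet_i))^2$.

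Subtracting $\grJ(1)$ as defined in \eqref{6.1} leaves an integral against $\prod_i s_i-1$. The telescoping identity
$$\prod_{i=1}^3 s_i-1=\sum_{i=1}^3(s_i-1)\prod_{j<i}s_j,$$
combined with $|s_i|\le 1$, $|s_i-1|\ll\min(1,(\eta_i\bet_i)^2)$, and the observation $\eta_i\asymp\eta$ (which follows from $|\eta_i-\eta|\le\eta^2\le\eta$), reduces the problem to bounding
$$I_i=\int_{\dbR^3}|v_1(\bet_1,\bet_2)|^6|v_1(\bet_1,\bet_3)|^4\min\bigl(1,(\eta\bet_i)^2\bigr)\,\d\bfbet\qquad(i=1,2,3).$$
Applying the elementary interpolation $\min(1,x^2)\le|x|^{2\theta}$, valid for any $\theta\in(0,1)$, one obtains $I_i\ll\eta^{2\theta}J_i(\theta)$, where
$$J_i(\theta)=\int_{\dbR^3}|v_1(\bet_1,\bet_2)|^6|v_1(\bet_1,\bet_3)|^4|\bet_i|^{2\theta}\,\d\bfbet.$$

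The final technical step is to verify $J_i(\theta)<\infty$ for a suitable positive $\theta$. Using the pointwise bound $v_1(\bet_1,\bet_2)\ll(1+|\bet_1|+|\bet_2|)^{-1/3}$ from \cite[Theorem 7.3]{Vau1997} specialized to $B=1/2$, the sixth power gives $\ll(1+|\bet_1|+|\bet_2|)^{-2}$ and the fourth power gives $\ll(1+|\bet_1|+|\bet_3|)^{-4/3}$. Integrating out the two variables distinct from $\bet_i$ reduces $J_i(\theta)$ to a residual one-dimensional integral in $\bet_1$ whose integrand is of order $(1+|\bet_1|)^{-4/3+2\theta}$ (for $i=1$, and by similar analyses for $i=2,3$), which converges provided $\theta<1/6$. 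The choice $\theta=1/72$ then yields the claimed bound $U(\bfeta)-\grJ(1)\ll\eta^{1/36}$. The principal obstacle is the delicate bookkeeping needed to distribute the modest $1/3$-decay of $v_1$ among the three integration variables and the interpolation weight, and it is this tight budget that ultimately constrains the exponent of $\eta$ obtainable by this method.
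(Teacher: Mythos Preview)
Your argument is correct and follows the same overall strategy as the paper: apply Fourier inversion to rewrite $U(\bfeta)$ as $\int_{\dbR^3}v_1(\bet_1,\bet_2)^6v_1(\bet_1,\bet_3)^4K(\bfbet)\,\d\bfbet$ with $K(\bfbet)=\prod_i\eta_i^{-1}w_{\eta_i}(\bet_i)$, and then estimate the integral of the integrand against $K(\bfbet)-1$ using the decay of $v_1$ from \cite[Theorem~7.3]{Vau1997}.

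The one genuine difference lies in how the error integral is handled. The paper does not telescope or interpolate; it splits $\dbR^3$ into the box $\grD=[-\eta^{-1/2},\eta^{-1/2}]^3$ and its complement $\grE$. On $\grD$ one has $|1-K(\bfbet)|\ll\eta^2|\bfbet|^2\ll\eta$ uniformly, so this piece contributes $O(\eta)$ via the absolute convergence of $\grJ(1)$. On $\grE$ the product $(1+|\bet_1|)(1+|\bet_2|)(1+|\bet_3|)$ exceeds $\eta^{-1/2}$, so one may peel off a factor $(\eta^{-1/2})^{-1/18}=\eta^{1/36}$ from the bound $((1+|\bet_1|)(1+|\bet_2|)(1+|\bet_3|))^{-10/9}$ and still integrate the remaining $-19/18$ power. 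Your telescoping plus the interpolation $\min(1,x^2)\le|x|^{2\theta}$ accomplishes the same thing in one stroke, at the cost of the slightly more delicate verification that $J_i(\theta)<\infty$ for each $i$ separately. Both routes are natural and yield the identical exponent $1/36$; the paper's box argument is marginally quicker, while yours makes more transparent that any $\theta<1/6$ would work and hence that the exponent is not sharp.
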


\begin{proof} Put
$$K(\bfbet)=\prod_{i=1}^3\eta_i^{-1}w_{\eta_i}(\bet_i).$$
Then by interchanging orders of integration, it follows from (\ref{9.4a}) that
$$U(\bfeta)=\int_{\dbR^3}v_1(\bet_1,\bet_2)^6v_1(\bet_1,\bet_3)^4K(\bfbet)\d\bfbet .$$
Hence, on recalling (\ref{6.1}), we see that
\begin{equation}\label{9.5}
U(\bfeta)-\grJ(1)=\int_{\dbR^3}v_1(\bet_1,\bet_2)^6v_1(\bet_1,\bet_3)^4\left( 
K(\bfbet)-1\right) \d\bfbet .
\end{equation}

\par Next, put $\grD=[-\eta^{-1/2},\eta^{-1/2}]^3$ and $\grE=\dbR^3\setminus \grD$. 
From the power series expansion of $w_\eta(\bet)$ underlying (\ref{9.2}), we have
$$0\le 1-K(\bfbet)\ll \min \{ 1, \eta^2(\bet_1^2+\bet_2^2+\bet_3^2)\}.$$
Hence, the absolute convergence of the integral $\grJ(1)$ ensures that the contribution 
from integrating over $\grD$ on the right hand side of (\ref{9.5}) is at most
$$\sup_{\bfbet \in \grD}|1-K(\bfbet)| \int_{\dbR^3}v_1(\bet_1,\bet_2)^6
v_1(\bet_1,\bet_3)^4\d\bfbet \ll \eta .$$
Meanwhile, on making use of an argument akin to that delivering (\ref{6.3}), one finds that 
the corresponding contribution from $\grE$ is bounded above by
$$ \int_\grE \left( (1+|\beta_1|)(1+|\beta_2|)(1+|\bet_3|)\right)^{-10/9}\d\bfbet 
\ll (\eta^{-1/2})^{-1/18}=\eta^{1/36}.$$
Thus we infer from (\ref{9.5}) that
$$U(\bfeta)-\grJ(1)\ll \eta^{1/36},$$
completing the proof of the lemma.
\end{proof}

By using the definitions of $W_\eta^\pm (\gam)$ to expand the products (\ref{9.3z}) 
defining $V_\eta^\pm (\bfxi)$ as a linear combination of terms of the shape
$$\what_{\eta_1}(F_1(\bfxi))\what_{\eta_2}(F_2(\bfxi)\what_{\eta_3}(F_3(\bfxi)),$$
it follows from Lemma \ref{lemma9.1} that
\begin{align}
\int_\grU V_\eta^+(\bfxi)\d\bfxi &=\left( (\eta+\eta^2)(1+1/\eta)-\eta(1/\eta)\right)^3
\left( \grJ(1)+O(\eta^{1/36})\right) \notag\\
&=\left(8\eta^3+O(\eta^4)\right) \left( \grJ(1)+O(\eta^{1/36})\right) \label{9.6}
\end{align}
and
\begin{align}
\int_\grU V_\eta^-(\bfxi)\d\bfxi &=\left( \eta(1/\eta)+(\eta-\eta^2)(1-1/\eta)\right)^3
\left( \grJ(1)+O(\eta^{1/36})\right) \notag\\
&=\left(8\eta^3+O(\eta^4)\right) \left( \grJ(1)+O(\eta^{1/36})\right) .\label{9.7}
\end{align}

\par We conclude from (\ref{9.4}), (\ref{9.6}) and (\ref{9.7}) that
$$(2\eta)^{-3}M_\infty (\eta)=\grJ(1)+O(\eta^{1/36}).$$
Consequently, the definition (\ref{1.3}) shows that
$$\chi_\infty =\lim_{\eta\rightarrow 0+}\left( \grJ(1)+O(\eta^{1/36})\right) =\grJ(1).$$
On recalling (\ref{7.4}), (\ref{8.1}) and (\ref{9.1}), we may conclude that
$$N^*(B)=B^5\grS\grJ(1)+O(B^{5-\del/18})=\calC B^5+O(B^{5-\del/18}),$$
where $\calC=\chi_\infty \prod_p\chi_p$. The conclusion of Theorem \ref{theorem1.1} now 
follows on verifying that $\calC>0$, as described in the argument leading to (\ref{5.5a}) 
above.\par

It remains to confirm that the real density $\chi_\infty$ is positive. This is routine. Observe 
first that the point
$$\bfx_0=(1/\sqrt{3},0,0,-1/\sqrt{3},0,\ldots, 0)$$
is a non-singular solution of the system of equations
\begin{equation}\label{9.z}
F_1(\bfx)=F_2(\bfx)=F_3(\bfx)=0.
\end{equation}
By the Implicit Function Theorem (see \cite[Theorem 7-6]{Apo1974}), there exists a 
positive number $\del$ with the property that whenever
$$|z_4+1/\sqrt{3}|<\del\quad \text{and}\quad |z_i|<\del\qquad(5\le i\le 10),$$
then the equations (\ref{9.z}) possesses a solution $(x_1,x_2,x_3)$ with coordinates 
$x_i=x_i(\bfz)$ satisfying
$$|x_1(\bfz)-1/\sqrt{3}|\ll \del,\quad |x_2(\bfz)|\ll \del,\quad |x_3(\bfz)|\ll \del .$$
Here, the implicit constants are absolute. Notice that whenever $\del$ is sufficiently small, 
then the partial derivatives of the polynomials $F_i(x_1,x_2,x_3,\bfz)$ $(i=1,2,3)$ remain 
close to their values at $\bfx_0$. It therefore follows that there is an absolute constant 
$\Lam$ having the property that whenever $|\eta_i|<\Lam \eta$ $(i=1,2,3)$, then
$$|F_i(x_1(\bfz)+\eta_1,x_2(\bfz)+\eta_2,x_3(\bfz)+\eta_3,z_4,\ldots ,z_{10})|<\eta.$$
In particular,
\begin{align*}
M_\infty (\eta)&\ge (2\Lam \eta)^3\text{mes} \{ (z_4,\ldots ,z_{10})\in (-\del,\del)^7\}
\\
&=(2\Lam \eta)^3(2\del)^7\gg \eta^3.
\end{align*} 
Hence
$$\chi_\infty =\lim_{\eta \rightarrow 0+}(2\eta)^{-3}M_\infty (\eta)>0.$$

\par Our method to treat the singular integral works in broad generality. Write 
$\grU_s=\left[ -\tfrac{1}{2},\tfrac{1}{2}\right]^s$. Suppose that an affine variety is
defined by polynomial equations
$$F_1(x_1,\ldots ,x_s)=\ldots =F_r(x_1,\ldots ,x_s)=0.$$
Then, whenever the formal singular integral
$$I=\int_{\dbR^r}\int_{}
e\left(\bet_1F_1(\bfx)+\ldots +\bet_rF_r(\bfx)\right) \d\bfx \d\bfbet $$
converges absolutely, the above method shows that
$$I=\lim_{\eta\rightarrow 0+}(2\eta)^{-r}M_\infty (\eta),$$
where now $M_\infty(\eta)$ is the Lebesgue measure of the set of all $\bfx\in \grU_s$ 
for which $|F_j(\bfx)|<\eta$ $(1\le j\le r)$. Further, if the variety contains a non-singular 
point $\bfx_0\in \grU_s$, then our methods also show that $I>0$.

\bibliographystyle{amsbracket}

\begin{thebibliography}{18}

\bibitem{Apo1974}
T. M. Apostol, \emph{Mathematical analysis}, 2nd ed., Addison-Wesley, Reading, MA, 1974.

\bibitem{BM1990}
V. V. Batyrev and Yu. I. Manin, \emph{Sur le nombre des points rationnels de hauteur 
born\'e des vari\'et\'es alg\'ebriques}, Math. Ann. \textbf{286} (1990), no. 1-3, 27--43. 

\bibitem{BBS2014}
V. Blomer, J. Br\"udern and P. Salberger, \emph{On a certain senary cubic form}, 
Proc. Lond. Math. Soc. (3) \textbf{108} (2014), no. 4, 911--964.

\bibitem{BD2015}
J. Bourgain and C. Demeter, \emph{The proof of the $l^2$-decoupling conjecture}, 
Ann. of Math. (2) \textbf{182} (2015), no. 1, 351--389.

\bibitem{dlB2007}
R. de la Bret\`eche, \emph{R\'epartition des points rationnels sur la cubique de Segre}, 
Proc. Lond. Math. Soc. (3) \textbf{95} (2007), no. 1, 69--155.

\bibitem{BR2015}
J. Br\"udern and O. Robert, \emph{Rational points on linear slices of diagonal 
hypersurfaces}, Nagoya Math. J. \textbf{218} (2015), 51--100.

\bibitem{FMT1989}
J. Franke, Yu. I. Manin and Y. Tschinkel, \emph{Rational points of bounded height on Fano 
varieties}, Invent. Math. \textbf{95} (1989), no. 2, 421--435.

\bibitem{Hoo1986}
C. Hooley, \emph{On some topics connected with Waring's problem}, J. Reine Angew. 
Math. \textbf{369} (1986), 110--153. 

\bibitem{Hua1965}
L.-K. Hua, \emph{Additive theory of prime numbers}, American Math. Soc., Providence, RI, 
1965.

\bibitem{Pey1995}
E. Peyre, \emph{Hauteurs et mesures de Tamagawa sur les vari\'et\'es de Fano}, Duke 
Math. J. \textbf{79} (1995), no. 1, 101--218.

\bibitem{Pey2001}
E. Peyre, \emph{Torseurs universels et m\'ethode du cercle}, Rational points on algebraic 
varieties, pp. 221--274, Progr. Math. \textbf{199}, Birkh\"auser, Basel, 2001.

\bibitem{Sch1982}
W. M. Schmidt, \emph{Simultaneous rational zeros of quadratic forms}, in: Seminar on 
Number Theory (Paris 1980-81), Progr. Math. \textbf{22}, Birkh\"auser, Boston, MA, 1982, 
281--307.

\bibitem{Sch1985}
W. M. Schmidt, \emph{The density of integer points on homogeneous varieties}, 
Acta Math. \textbf{154} (1985), no. 3--4, 243--296. 

\bibitem{Vau1997}
R. C. Vaughan, \emph{The Hardy-Littlewood method}, 2nd ed., Cambridge University Press, 
Cambridge, 1997.

\bibitem{VW1995}
R. C. Vaughan and T. D. Wooley, \emph{On a certain nonary cubic form and related 
equations}, Duke Math. J. \textbf{80} (1995), no. 3, 669--735.

\bibitem{Woo2015}
T. D. Wooley, \emph{Mean value estimates for odd cubic Weyl sums}, Bull. Lond. Math. 
Soc. \textbf{47} (2015), no. 6, 946--957.

\end{thebibliography}
\providecommand{\bysame}{\leavevmode\hbox to3em{\hrulefill}\thinspace}

\end{document}